\newtheorem{theorem}{\bf Theorem}[section]
\newtheorem{lemma}[theorem]{\bf Lemma}
\newtheorem{corollary}[theorem]{\bf Corollary}
\newenvironment{proof}{\noindent{\em Proof:}}{\quad \hfill$\Box$\vspace{2ex}}
\DeclareMathOperator{\Span}{span}
\DeclareMathOperator{\diag}{diag}
\def \aP {\mathbb P}
\def \aS {\mathbb S}
\def \bO {\mathcal{O}}
\def \bS {\mathcal{S}}
\def \Bb {\textbf{b}}
\def \Ba {\boldsymbol{a}}
\def \Bb {\textbf{b}}
\def \Bx {\textbf{x}}
\def \Bd {\textbf{d}}
\def \Bk {\boldsymbol{k}}
\def \Bu {\textbf{u}}
\begin{document}
\begin{CJK}{GBK}{song}

\title{\bf Fast non-polynomial interpolation and integration for functions with logarithmic singularities}%Study Report 8
\author{ Yinkun Wang
         \footnotemark[3],
         Xiangling Chen
         \footnotemark[4],
         Ying Li
         \footnotemark[3],
         and
         Luo Jianshu
         \footnotemark[3]
         }

\renewcommand{\thefootnote}{\fnsymbol{footnote}}

\footnotetext[3]{College of Science, National University of Defense Technology, Changsha 410073, People's Republic of China. }
\footnotetext[4]{Key Laboratory of High Performance Computing and Stochastic Information Processing(HPCSIP) (Ministry of Education of China), College of Mathematics and Computer Science, Hunan Normal University, Changsha, Hunan 410081, P. R. China
.}
\date{Apr. 17, 2018}
\maketitle{}
\begin{abstract}
A fast non-polynomial interpolation is proposed in this paper for functions with logarithmic singularities. It can be executed fast with the discrete cosine transform.
Based on this interpolation, a new quadrature is proposed for a kind of logarithmically singular integrals.
The interpolation and integration errors are also analyzed.
Numerical examples of the interpolation and integration are shown to
 validate the efficiency of the proposed new interpolation and the new quadrature. %Interpolation errors of a manmade function, the well-known Hankel function of the first kind and the kernel-related function appeared in Hall\'{e}n equation are shown as numerical examples to
\end{abstract}

\textbf{Key words}: non-polynomial interpolation; logarithmic singularity; singular integral
%
%\textbf{AMS subject classifications:}  65R20, 45L05

\section{Introduction}
Integral equations with singular kernels are frequently encountered in different branches of mathematical physics
in the formulation of initial value and boundary value problems. The logarithmic singularity appears typically in the kernel of the integral equations, especially in two spatial dimensions in the electromagnetics and acoustics. For instance, the Helmholtz equation in two dimensions can be reformulated into an integral equation with its kernel expressed in the form of Hankel function of order zero which has a logarithmic singularity \cite{COLTON2013}. Another famous example in the analysis of thin wire antennas is  the Hall\'{e}n's integral equation whose kernel is also logarithmically singular \cite{2007BRUNO,WANG2015wire}.

In the numerical solution (such as in Galerkin solution and collocation solution) of integral equations with the logarithmic singularity, one is always led naturally to the evaluation of functions with logarithmic singularity and the computation of integrals with the logarithmic singularity.
The development of the fast and accurate interpolation of functions with the logarithmic singularity can provide a way to evaluate the functions quickly and to compute the singular integrals accurately.
For this purpose, we aim to propose a fast non-polynomial interpolation based on Chebyshev polynomials for functions of the form,
\begin{equation}\label{e1}
K(x):=g_1(x)+g_2(x)\log|x-\alpha|, x\in [-1,1]\backslash\{\alpha\}
\end{equation}
where $\alpha\in[-1,1]$. The functions $g_1$ and $g_2$ have the high function regularity or even are smooth, but they may be mixed together in an expression.

Due to the logarithmic singularity, classical polynomial interpolations, such as the Lagrange interpolation and the Chebyshev interpolation, shall lead to a great error in approximation of singular functions of this kind.
It is because of the singular factor of the high-order derivative of the functions in the remainder of the polynomial interpolation.
To handle this difficulty, the interpolation was made on graded meshes with more meshes near the singularity and less faraway \cite{RICE1969}.
This technique is frequently used in the numerical solution of singular equations and the calculation of singular integrals. A weighted Gaussian quadrature rule was proposed in \cite{CROW1993}.
The Clenshaw-Curtis quadrature was also used for the computation of singular integrals due to its efficient performance \cite{WANGH2018,HASEGAWA2018}.
Besides, many researches have been done on the oscillatory integrals with the logarithmic singularity  \cite{DOMINGUEZ2014,CHENR2015,KANGH2015,KANGH2017,KANGH2018}.

Different from graded meshes, our basic idea for the new interpolation is to append singular basis functions to the Chebyshev interpolation basis.
This enrichment shall capture the singularity of the singular functions and thus reduce the effect of the singularity on the accuracy. This similar idea has been used in the numerical solution of Fredholm and Volterra integral equations \cite{CAO1994,CAO2003}.
Combining the FFT, we then develop a fast scheme to execute this non-polynomial interpolation. Based on this interpolation, we propose a new quadrature for a kind of logarithmically singular integrals.

This paper is organized as follows. The fast non-polynomial interpolation is proposed in section 2 and then analyze its computation cost and interpolation error. In section 3, we present the integration based on the new interpolation. In section 4, numerical examples are given to show the efficiency of the proposed interpolation for singular functions and that of the integration for singular integrals.

\section{Non-polynomial interpolation for singular functions}
In this section, we present the non-polynomial interpolation for singular functions of the form \eqref{e1} based on the Chebyshev polynomials of the first kind using the nodes of Chebyshev points, i.e. the zeros of Chebyshev polynomials of the first kind. An implementation using the discrete cosine transform (DCT) is proposed for the interpolation at the meantime. The third part of this section is about the error analysis for this interpolation.

\subsection{Derivation of interpolation}
We consider the problem of approximating a function $K$ of the form \eqref{e1} by a certain combination of
singular functions and polynomial functions. More precisely, we replace $K$ by a function $K_{n_1,n_2}$ of
the form
\begin{equation}\label{e2}
K_{n_1,n_2}(x):=\sum_{j=0}^{n_1-1}a_{j}T_j(x)+\log|x-\alpha|\sum_{k=0}^{n_2-1}b_kT_k(x), x\in [-1,1]\backslash \{\alpha\}
\end{equation}
where $T_j(x):=\cos \left(j\arccos x\right)$ are Chebyshev polynomials of the first kind, $n_1, n_2$ are positive integers and $a_j,b_k$ are the unknown coefficients to be determined. Let $n:=n_1+n_2$. In order to derive the coefficients, the error term defined as
\[
E_{n_1,n_2}(K,x):=K(x)-K_{n_1,n_2}(x)
\]
is required to vanish at Chebyshev points
\[
x_j=\cos\frac{(2j+1)\pi}{2n}, j=0,1,\ldots,n-1.
\]
We note that $n$ should be selected such that $\alpha$ is different from any Chebyshev points $x_j$ we used here since $\alpha$ is a singular point.

We next present the interpolation in the matrix form. Let $\Bx:=[x_j: j=0,\ldots,n-1]^T$, $\Ba:=[a_j: j=0,\ldots,n_1-1]^T$ and $\Bb:=[b_j: j=0,\ldots,n_2-1]^T$ where the superscript $T$ means the transpose. We then define the Vandermonde-type matrix $A:=[T_0(\Bx)|T_1(\Bx)|\ldots|T_{n-1}(\Bx)]$. Due to the discrete orthogonal property of Chebyshev polynomials, it is well-known that
\begin{equation}\label{e3}
A^TA=D
\end{equation}
where $D:=\diag(\Bd)$ and $\Bd:=[d_j:j=0,\ldots,n-1]^T$ with $d_0=n$ and $d_j=n/2, j=1,\ldots,n-1$. We also define a matrix
$P:=\diag(\log|\Bx-\boldsymbol{\alpha}|)$
 and a vector $\Bk:=K(\Bx)$,
 where $\boldsymbol{\alpha}$ is the constant vector of $\alpha$ of the same size as $\Bx$.

The non-polynomial interpolation reads in the matrix form below
\begin{equation}\label{e4}
A_{1:n_1}\Ba+PA_{1:n_2}\Bb=\Bk
\end{equation}
where $A_{u:v}$ means the matrix composing the columns of matrix $A$ from the $u$-th column to the $v$-th column, $u,v$ are positive integers.
To solve the linear system \eqref{e4}, we multiply both the sides of the above equation by the matrix $(A_{n_1+1:n})^T$ and obtain that
\begin{equation}\label{e5}
(A_{n_1+1:n})^TPA_{1:n_2}\Bb=(A_{n_1+1:n})^T\Bk
\end{equation}
where we have used the discrete orthogonal property \eqref{e3}.

Suppose that the matrix product $(A_{n_1+1:n})^TP A_{1:n_2}$ is invertible, the unknown $\Bb$ and $\Ba$ can be solved through \eqref{e5} and \eqref{e4} subsequently. This assumption can be verified before using this non-polynomial interpolation since it is independent of the function to be interpolated. Hence we always assume that the matrix product $(A_{n_1+1:n})^TPA_{1:n_2}$ is invertible.

Formula \eqref{e2} with the coefficients $\Ba$ and $\Bb$ forms the non-polynomial interpolation for logarithmically singular functions.

\subsection{Computational complexity}

 The matrix-vector product $A^T\Bu$ computes $\tilde{\Bu}:=[\tilde{u}_0,\tilde{u}_1,\ldots,\tilde{u}_{n-1}]^T$ by the discrete cosine transform of type II (shortened as DCT II) of $\Bu$, that is
 \[
 \tilde{u}_k=\sum_{j=0}^{n-1}u_j \cos\frac{k(2j+1)\pi}{2n},\; k=0,1,\ldots,n-1.
 \]
  where $\Bu:=[u_0,u_1,\ldots,u_{n-1}]^T$ is an arbitrary $n$-vector. This matrix-vector product can be evaluated fast in $\bO(n\log n)$ operations instead of $\bO(n^2)$. Similarly, the matrix-vector product $A\tilde{\Bu}$ produces $\Bu$ by the discrete cosine transform of type III (shortened as DCT III) of $\tilde{\Bu}$ and also can be computed efficiently with $\bO(n\log n)$ operations.

We then present the analysis of the computational complexity of solving \eqref{e5} by GMRES. For iterative methods, the key computation is the matrix-vector product. For this reason, we analyze the computational complexity of $(A_{n_1+1:n})^TPA_{1:n_2}\Bu$ where $\Bu$ is a $n_2$-vector. Let $\hat{\Bu}$ denote a $n$-vector by padding $n_1$ zeros at the end of $\Bu$. Then we have $A_{1:n_2}\Bu=A\hat{\Bu}$. The result of the matrix-vector product $(A_{n_1+1:n})^TPA_{1:n_2}\Bu$ is the last $n_2$ elements of the matrix-vector product $A^TPA\hat{\Bu}$. Since $P$ is a diagonal matrix and the matrix-vector product of $A$ and $A^T$ can be calculated fast within $\bO(n\log n)$ by the DCT III and the DCT II, respectively, the cost for  $A^TPA\hat{\Bu}$ is about $\bO(n\log n)$. Hence a total computation complexity of GMRES in solving \eqref{e5} depends on the iteration number and its most complexity is about $\bO(n_2n\log n)$.

%Since $P$ is a diagonal matrix, the matrix product $PA_{1:n_2}$ can be computed directly in $\bO(n_2n)$ operations.
%The matrix product, $(A_{n_1+1:n})^TPA_{1:n_2}$, is the last $n_2$ rows of the product $A^TPA_{1:n_2}$ which can be obtained fast in $\bO(n_2n\log n)$ operations by DCT II. Thus it needs $\bO(n_2n\log n)$ operations to form the linear system \eqref{e5}.
%The computational cost for solving this linear system is $\bO(n_2^3)$ by direct methods and about $\bO(n_2^2)$ by iterated methods. Generally, $n_2$ is much smaller than $n$ (for example, 2 or 3 used in numerical examples) and therefore the total computation for the solution of $\Bb$ is about $\bO(n_2n\log n)$.

 The coefficients $\Ba$ can be obtained from \eqref{e4} fast within $\bO(n\log n)$. To show this, define two new $n$-vectors
 $\hat{\Ba}:=[a_0,\ldots,a_{n_1-1},0,\ldots,0]^T$ and $\hat{\Bb}:=[b_0,\ldots,b_{n_2-1},0,\ldots,0]^T$ by padding $\Ba$ with $n_2$ zeros and padding $\Bb$ with $n_1$ zeros, respectively. It is obtained obviously from \eqref{e4} that
\[
\hat{\Ba}=D^{-1}A^T(\Bk-PA\hat{\Bb}).
\]
Due the fast computation of the matrix-vector product $A\hat{\Bb}$ and $A^T(\Bk-PA\hat{\Bb})$ within $\bO(n\log n)$, the computational complexity to get $\Ba$ is about $\bO(n\log n)$.

To sum up, the computational complexity for the non-polynomial interpolation is about $\bO(n_2n\log n)$.

\subsection{Error Analysis}
In this part, we discuss the non-polynomial interpolation error for the approximation of singular functions.

We first define the corresponding operator for the non-polynomial interpolation. For this purpose, let $I:=[-1,1]\backslash \{\alpha\}$ where $\alpha\in[-1,1]$. By $\aP_{n_1}$ we denote the space of polynomials of degree less than $n_1$. We define another space consisting of polynomials and some extra logarithmic functions by
\[
\aS^\alpha_{n_1,n_2}(I):=\Span\{x^j,x^k\log|x-\alpha|: j=0,1,\ldots,n_1-1,\; k=0,1,\ldots,n_2-1,\; x\in I\}.
\]
It is clear that $\aP_{n_1}=\aS^\alpha_{n_1,0}$. With the interpolation points $\{x_j: j=0,1,\ldots,n-1 \}\subset I$, $n=n_1+n_2$, by
\[
\bS^\alpha_{n_1,n_2}: C(I)\rightarrow \aS^\alpha_{n_1,n_2}(I)
\]
we denote the interpolation operator that maps the function $f\in C(I)$ onto a function $\bS^\alpha_{n_1,n_2}f\in \aS^\alpha_{n_1,n_2}(I)$ with the property
\[
\left(\bS^\alpha_{n_1,n_2}f\right)(x_j)=f(x_j),\; j=0,1,\ldots,n-1.
\]
The operator $\bS^\alpha_{n_1,n_2}$ is linear and is a projection i.e. $\left(\bS^\alpha_{n_1,n_2}\right)^2=\bS^\alpha_{n_1,n_2}$ since $\bS^\alpha_{n_1,n_2}f=f$ for all $f\in\aS^\alpha_{n_1,n_2}(I)$.
When $n_2=0$, the operator $\bS^\alpha_{n_1,0}$ is degenerated to the normal polynomial interpolation operator,  shorten as $\bS_{n_1,0}$. When $n_2>0$, the interpolant $\bS^\alpha_{n_1,n_2}f$ is a non-polynomial function with the logarithmic singularity.

For the purpose of bounding the interpolation error of the non-polynomial interpolation, we introduce a norm for the operator $\bS_{n_1,n_2}^\alpha$. Define
 \[
 \|\bS_{n_1,n_2}^{\alpha}\|_{1,\infty}:=\sup_{f\in C(I),\|f\|_\infty\neq 0}\frac{\|\bS_{n_1,n_2}^{\alpha}f\|_1} {\|f\|_\infty}
 \]
where $\|u\|_1:=\int_{-1}^1 |u(t)|dt$ for $u\in L^1(I)$ and $\|\cdot\|_\infty$ is the maximum norm.
If the maximum norm of $f\in C(I)$ is unbounded, $\frac{1}{\|f\|_\infty}$ is set to be 0.
We give a bound for this operator norm.
Suppose that the functions $\ell_j, j=0,1,\ldots,n-1$ form a Lagrange-type basis for space $\aS_{n_1,n_2}^\alpha(I)$
satisfying
\[
\ell_j(x_i)=\delta_{i,j},
\]
where $\delta_{i,j}$ denotes the Kronecker symbol. Since $\aS_{n_1,n_2}^\alpha(I)\subset L^1(I)$, there exists that $\|\ell_j\|_1<\infty$, $j=0,1,\ldots,n-1$.

\begin{lemma}\label{onorm}
If the operator $\bS_{n_1,n_2}^{\alpha}$ is the non-polynomial interpolation operator based on the space $\bS_{n_1,n_2}^\alpha(I)$ and points $\{x_j: j=0,1,\ldots,n-1 \}\subset I$, then
    \[\|\bS_{n_1,n_2}^{\alpha}\|_{1,\infty}\leq \sum_{j=0}^{n-1} \|\ell_j\|_1 \]
\end{lemma}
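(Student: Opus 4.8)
The plan is to use the Lagrange-type basis $\{\ell_j\}$ to write the interpolant explicitly, and then estimate its $L^1$ norm pointwise via the triangle inequality. Since $\bS_{n_1,n_2}^{\alpha}f$ lies in $\aS_{n_1,n_2}^\alpha(I)$ and agrees with $f$ at the nodes $x_0,\ldots,x_{n-1}$, and since $\{\ell_j\}$ is a basis for that space with $\ell_j(x_i)=\delta_{i,j}$, the interpolant must coincide with $\sum_{j=0}^{n-1} f(x_j)\,\ell_j$: both are elements of $\aS_{n_1,n_2}^\alpha(I)$ taking the same values at the $n$ interpolation points, and interpolation in this space is unique (the operator $\bS_{n_1,n_2}^{\alpha}$ is well-defined, hence the nodal evaluation map on $\aS_{n_1,n_2}^\alpha(I)$ is injective). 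This identity is the one substantive point and I would state it first.

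Next I would bound the $L^1$ norm. For any $f\in C(I)$ with $\|f\|_\infty\neq 0$,
\[
\|\bS_{n_1,n_2}^{\alpha}f\|_1=\int_{-1}^1\Bigl|\sum_{j=0}^{n-1} f(x_j)\,\ell_j(t)\Bigr|\,dt
\le \int_{-1}^1 \sum_{j=0}^{n-1} |f(x_j)|\,|\ell_j(t)|\,dt
\le \|f\|_\infty \sum_{j=0}^{n-1}\int_{-1}^1 |\ell_j(t)|\,dt
=\|f\|_\infty \sum_{j=0}^{n-1}\|\ell_j\|_1,
\]
where I used $|f(x_j)|\le\|f\|_\infty$ for each node and the finiteness $\|\ell_j\|_1<\infty$ noted in the text to justify splitting the integral of the finite sum. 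Dividing by $\|f\|_\infty$ and taking the supremum over admissible $f$ gives the claimed bound. The case where $\|f\|_\infty$ is unbounded contributes nothing to the supremum by the stated convention that $1/\|f\|_\infty=0$ there, so it may be dismissed in one sentence.

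The only real obstacle is the justification of the representation $\bS_{n_1,n_2}^{\alpha}f=\sum_j f(x_j)\ell_j$, which rests on uniqueness of the interpolant in $\aS_{n_1,n_2}^\alpha(I)$; this is exactly the well-posedness that the operator $\bS_{n_1,n_2}^{\alpha}$ was defined to have (equivalently, it follows from the assumed invertibility of $(A_{n_1+1:n})^TPA_{1:n_2}$, which makes the nodal conditions \eqref{e4} uniquely solvable). Everything after that is the triangle inequality and is routine. I would therefore spend one short paragraph on the basis representation and a couple of display lines on the estimate.
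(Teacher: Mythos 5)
Your proposal is correct and follows essentially the same route as the paper: write $\bS_{n_1,n_2}^{\alpha}f=\sum_{j=0}^{n-1}f(x_j)\ell_j$ in the Lagrange-type basis, apply the triangle inequality, bound $|f(x_j)|\le\|f\|_\infty$, and take the supremum. Your extra remarks justifying the basis representation via uniqueness of the interpolant are sound but simply make explicit what the paper's proof takes for granted.
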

\begin{proof} For $f\in C(I)$,
    \[
    \bS_{n_1,n_2}^{\alpha}f=\sum_{j=0}^{n-1}f(x_j)\ell_j.
    \]
    Together with the triangle inequality of the norm, we have from the above equation that
    \[
    \|\bS_{n_1,n_2}^{\alpha}f\|_1\leq\sum_{j=0}^{n-1}|f(x_j)|\|\ell_j\|_1\leq \|f\|_\infty \sum_{j=0}^{n-1}\|\ell_j\|_1
    \]
    The conclusion follows directly.
\end{proof}

We now present the result of the interpolation error for the non-polynomial interpolation.
\begin{theorem}\label{Inte-E1}
    If $\bS_{n_1,n_2}^\alpha$ is defined based on interpolation points $\{x_j: j=0,1,\ldots,n-1 \}\subset I$, $n=n_1+n_2$, then the interpolation error of $\bS_{n_1,n_2}^\alpha$, for functions of the form
    $K(x)=g_1(x)+g_2(x)\log|x-\alpha|$ with $g_1$ and $g_2$ smooth, satisfies
    \begin{equation}
        \| K -\bS_{n_1,n_2}^\alpha  K \|_1\leq 2(1+\|\bS_{n_1,n_2}^\alpha\|_{1,\infty})\|\tilde{g}_1-\bS_{n_1,0}\tilde{g}_1\|_\infty,
    \end{equation}
    where $\tilde{g}_1$ is defined by
    \begin{equation}\label{tg1}
\tilde{g}_1(x)=g_1(x)+\left(g_2(x)-\tilde{g_2}(x)\right)\log |x-\alpha|,
%\; x\neq \alpha,\\ g_1(\alpha),\; x=\alpha,\end{cases}
\end{equation}
and
\begin{equation}
\tilde{g}_2(x)=\sum_{j=0}^{n_2-1}\frac{g_2^{(j)}(a)}{j!}(x-\alpha)^j.
\end{equation}

    %defined in \eqref{tg1}, belongs to the Sobolev space $H^{n_2}(I)$.
\end{theorem}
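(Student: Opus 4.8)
The plan is to decompose $K$ into a piece that lies exactly in the interpolation space $\aS_{n_1,n_2}^\alpha(I)$ plus a bounded continuous remainder, and then exploit that $\bS_{n_1,n_2}^\alpha$ is a projection that reproduces every polynomial of degree less than $n_1$. First I would observe that $\tilde g_2$ is precisely the Taylor polynomial of $g_2$ of degree $n_2-1$ about $\alpha$, so $\tilde g_2\in\aP_{n_2}$ and hence $\tilde g_2(\cdot)\log|\cdot-\alpha|\in\aS_{n_1,n_2}^\alpha(I)$. With $\tilde g_1$ as in \eqref{tg1} this yields the splitting $K=\tilde g_1+\tilde g_2(\cdot)\log|\cdot-\alpha|$. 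Since $g_2$ is smooth and $n_2\ge 1$, the difference $g_2-\tilde g_2$ vanishes to order $n_2$ at $\alpha$, so $(g_2(x)-\tilde g_2(x))\log|x-\alpha|\to 0$ as $x\to\alpha$; thus $\tilde g_1$ extends to a bounded continuous function on $[-1,1]$, in particular $\tilde g_1\in C(I)$, and (as $\alpha$ is not a node) both $\bS_{n_1,n_2}^\alpha\tilde g_1$ and $\bS_{n_1,0}\tilde g_1$ are well defined.

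Next, using linearity of $\bS_{n_1,n_2}^\alpha$ together with the reproduction property $\bS_{n_1,n_2}^\alpha f=f$ for every $f\in\aS_{n_1,n_2}^\alpha(I)$, I obtain $\bS_{n_1,n_2}^\alpha K=\bS_{n_1,n_2}^\alpha\tilde g_1+\tilde g_2(\cdot)\log|\cdot-\alpha|$, so the logarithmic term cancels in the error and
\[
K-\bS_{n_1,n_2}^\alpha K=\tilde g_1-\bS_{n_1,n_2}^\alpha\tilde g_1 .
\]
Because $\aP_{n_1}\subset\aS_{n_1,n_2}^\alpha(I)$, the operator $\bS_{n_1,n_2}^\alpha$ fixes every $p\in\aP_{n_1}$; choosing $p:=\bS_{n_1,0}\tilde g_1\in\aP_{n_1}$ gives $\tilde g_1-\bS_{n_1,n_2}^\alpha\tilde g_1=(\tilde g_1-p)-\bS_{n_1,n_2}^\alpha(\tilde g_1-p)$. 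Then the triangle inequality, the elementary bound $\|u\|_1\le 2\|u\|_\infty$ on $I$, and the definition of $\|\bS_{n_1,n_2}^\alpha\|_{1,\infty}$ yield
\[
\begin{aligned}
\|K-\bS_{n_1,n_2}^\alpha K\|_1
&\le \|\tilde g_1-p\|_1+\|\bS_{n_1,n_2}^\alpha(\tilde g_1-p)\|_1 \\
&\le \bigl(2+\|\bS_{n_1,n_2}^\alpha\|_{1,\infty}\bigr)\,\|\tilde g_1-p\|_\infty \\
&\le 2\bigl(1+\|\bS_{n_1,n_2}^\alpha\|_{1,\infty}\bigr)\,\|\tilde g_1-\bS_{n_1,0}\tilde g_1\|_\infty ,
\end{aligned}
\]
which is the asserted estimate.

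There is no deep obstacle; the argument is essentially a projection/quasi-optimality computation once the splitting is in place. The one point requiring care is the reduction step: verifying that $\tilde g_2(\cdot)\log|\cdot-\alpha|$ genuinely belongs to $\aS_{n_1,n_2}^\alpha(I)$ (so that it is reproduced exactly) and that $\tilde g_1$ is continuous and bounded on $[-1,1]$ (so that the interpolants in the bound make sense) — this is where the smoothness of $g_2$ and the hypothesis $n_2\ge 1$ are used. One should also invoke the standing invertibility assumption of Section~2.1 so that $\bS_{n_1,n_2}^\alpha$ is a well-defined projection. Finally, replacing $2+\|\bS_{n_1,n_2}^\alpha\|_{1,\infty}$ by the slightly larger $2(1+\|\bS_{n_1,n_2}^\alpha\|_{1,\infty})$ is harmless since $\|\bS_{n_1,n_2}^\alpha\|_{1,\infty}\ge 0$.
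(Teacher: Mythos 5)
Your proposal is correct and follows essentially the same route as the paper: the splitting $K=\tilde g_1+\tilde g_2\log|\cdot-\alpha|$, the reproduction of $\tilde g_2\log|\cdot-\alpha|$ by $\bS_{n_1,n_2}^\alpha$, insertion of $\bS_{n_1,0}\tilde g_1\in\aP_{n_1}$, and the projection/operator-norm estimate. Your norm bookkeeping is in fact slightly cleaner (yielding the marginally sharper constant $2+\|\bS_{n_1,n_2}^\alpha\|_{1,\infty}$ before relaxing to the stated bound), but the argument is the same as the paper's.
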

\begin{proof}
For any $ K \in C(I)$ with the form \eqref{e1}, it can be rewritten as
\[
K(x)=\tilde{g}_1(x)+\tilde{g}_2(x)\log|x-\alpha|.
\]
%where
%
%The evaluation for $\tilde{g}_1$ at $x=\alpha$ is defined by the limit as $x\rightarrow \alpha$ and
It is clear from the definition \eqref{tg1} that $\tilde{g}_1$ belongs to the Sobolev space $H^{n_2}(I)$.
Since the product $\tilde{g}_2(x)\log|x-\alpha|$ is in space $\aS_{n_1,n_2}^\alpha(I)$, there exists
\[
\bS_{n_1,n_2}^\alpha(\tilde{g}_2\log|\cdot-\alpha|)(x)=\tilde{g}_2(x)\log|x-\alpha|.
\]
 Then we have
    \begin{equation}\label{error}
\begin{split}
    \| K -\bS_{n_1,n_2}^\alpha  K \|_1& = \|\tilde{g}_1-\bS_{n_1,n_2}^\alpha\tilde{g}_1\|_1 \\
    &=\|\tilde{g}_1-\bS_{n_1,0}\tilde{g}_1+\bS_{n_1,n_2}^\alpha\bS_{n_1,0}\tilde{g}_1-\bS_{n_1,n_2}^\alpha\tilde{g}_1\|_1 \\
    &\leq (1+\|\bS_{n_1,n_2}^\alpha\|_{1,\infty})\|\tilde{g}_1-\bS_{n_1,0}\tilde{g}_1\|_1 \\
    &\leq 2(1+\|\bS_{n_1,n_2}^\alpha\|_{1,\infty})\|\tilde{g}_1-\bS_{n_1,0}\tilde{g}_1\|_\infty.
\end{split}
\end{equation}
The proof finishes.
\end{proof}

According to Theorem \ref{Inte-E1}, the interpolation error of the new interpolation is bounded by the norm of $S_{n_1,n_2}^\alpha$ and the polynomial interpolation error of a smoother function. It shows that the enriched singular functions in the basis can catch the singularity of the singular function and make the new interpolation behave like interpolating a smoother function by the polynomials. Hence this new interpolation is suitable for the singular functions.

While implementing the new interpolation method, we always interpolate on the Chebyshev points since it is more stable and accurate. Besides, the value of $n_2$ is always set to be smaller than that of $n_1$. It is because when $n_1$ and $n_2$ are compatible and large, the linear system \eqref{e5} could be unstable according to our numerical experience.
Hence, we next give a concrete error bound of the new interpolation based on the Chebyshev points with $n_2$ smaller than $n_1$.
To this end, we recall the error bound of the polynomial interpolation based on the Chebyshev points listed as a lemma \cite{XIANG2010chebyshev}.
\begin{lemma}\label{cheby}
If $f, f',\ldots,f^{(k-1)}$ are absolutely continuous on $[-1,1]$ and if $\int_{-1}^1 \frac{|f^{(k)}(t)|}{\sqrt{1-t^2}}dt=V_k<\infty$ for some $k\geq 1$, then for each $n\geq k+2$,
\[
\|f-\bS_{n,0}f\|_\infty\leq \frac{4V_k}{k\pi (n-1)\ldots(n-k)},
\]
where $\bS_{n,0}$ is based on $n$ Chebyshev points of the first kind.
\end{lemma}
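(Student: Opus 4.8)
The plan is to prove the bound from the Chebyshev expansion of $f$ together with the aliasing identities at the Chebyshev points and a decay estimate for the Chebyshev coefficients. Write $f(x)=\sum_{j=0}^{\infty}a_jT_j(x)$ with $a_j=\frac{2}{\pi}\int_{-1}^1\frac{f(x)T_j(x)}{\sqrt{1-x^2}}\,dx$ (the $j=0$ term halved). On the $n$ zeros of $T_n$ one has the aliasing relations $T_{2mn\pm j}\equiv(\pm1)\,T_j$ (signs according to parity), so substituting these into $\bS_{n,0}f$ expresses the interpolation error $f-\bS_{n,0}f$ as an explicit series in the tail coefficients $\{a_j:j\ge n\}$. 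A crude consequence is $\|f-\bS_{n,0}f\|_\infty\le 2\sum_{j\ge n}|a_j|$, but to reach the stated constant one should instead keep the aliased terms paired (group $a_{2mn-j}$ with $a_{2mn+j}$, or apply summation by parts through the recurrence $2j\,a_j=b_{j-1}-b_{j+1}$ linking the coefficients $a_j$ of $f$ with the coefficients $b_j$ of $f'$); this refinement is what produces the extra power of $n$ in the denominator of the claimed bound.

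For the coefficient decay, substitute $x=\cos\theta$, so $a_j=\frac{2}{\pi}\int_0^\pi f(\cos\theta)\cos(j\theta)\,d\theta$. Since $\theta\mapsto f(\cos\theta)$ extends to an even $2\pi$-periodic function whose odd-order $\theta$-derivatives vanish at $\theta=0$ and $\theta=\pi$, repeated integration by parts is legitimate under the hypothesis that $f,f',\dots,f^{(k-1)}$ are absolutely continuous, and it produces no boundary contributions. At the $k$-th stage one bounds the remaining integral by $\frac{2}{\pi}\int_0^\pi|f^{(k)}(\cos\theta)|\,d\theta=\frac{2}{\pi}\int_{-1}^1\frac{|f^{(k)}(t)|}{\sqrt{1-t^2}}\,dt=\frac{2V_k}{\pi}$, which, after tracking the accumulated factors $j,j-1,\dots$, yields an estimate of the form $|a_j|\lesssim\frac{V_k}{\pi}\,\frac{1}{j(j-1)\cdots(j-k+1)}$ for $j>k$, with the paired aliasing from the first step upgrading the effective exponent by one.

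Finally I would feed the coefficient estimate into the refined aliasing bound and sum the tail by the telescoping identity
\[
\frac{1}{j(j-1)\cdots(j-k)}=\frac{1}{k}\Big(\frac{1}{(j-1)(j-2)\cdots(j-k)}-\frac{1}{j(j-1)\cdots(j-k+1)}\Big),
\]
so that $\sum_{j\ge n}\frac{1}{j(j-1)\cdots(j-k)}=\frac{1}{k(n-1)(n-2)\cdots(n-k)}$, giving $\|f-\bS_{n,0}f\|_\infty\le\frac{4V_k}{k\pi(n-1)\cdots(n-k)}$. The hypothesis $n\ge k+2$ guarantees that $n-1,\dots,n-k$ are all at least $2$, so every denominator above is positive and the telescoping is valid. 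The main obstacle is the second step combined with the pairing in the first: obtaining the sharp coefficient decay and, above all, organizing the aliased tail (via the recurrence / summation by parts) so that the final constant comes out exactly $\frac{4}{k\pi}$ rather than a weaker one; after that the summation is routine. Since the statement is quoted verbatim from \cite{XIANG2010chebyshev}, one may alternatively just invoke that reference.
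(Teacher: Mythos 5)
The paper does not actually prove this lemma: it is recalled verbatim from \cite{XIANG2010chebyshev}, so your closing remark --- that one may simply invoke that reference --- is exactly what the paper does, and is the only step the paper itself supplies.

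Your sketched direct argument follows the same route as the cited source (Chebyshev expansion, aliasing at the zeros of $T_n$, coefficient decay via integration by parts in $\theta$, telescoping of the tail), but as a standalone proof it stops short at precisely the decisive point, which to your credit you flag yourself. Under the stated hypothesis (only $f^{(k-1)}$ absolutely continuous and $V_k=\int_{-1}^1 |f^{(k)}(t)|/\sqrt{1-t^2}\,dt<\infty$), $k$ integrations by parts give $|a_j|\le \frac{2V_k}{\pi\, j(j-1)\cdots(j-k+1)}$, i.e.\ $k$ factors in the denominator, not $k+1$; feeding this into the crude aliasing bound $\|f-\bS_{n,0}f\|_\infty\le 2\sum_{j\ge n}|a_j|$ yields only $\frac{4V_k}{(k-1)\pi\,(n-1)\cdots(n-k+1)}$, one power of $n$ short of the claim. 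The missing power cannot be obtained by a further integration by parts (that would require control of $f^{(k+1)}$); it comes from cancellation: one must retain the signed differences in the recurrence $2j\,a_j=b_{j-1}-b_{j+1}$ (iterated down to the coefficients of $f^{(k)}$) and sum the aliased tail by parts, so that the $f^{(k)}$-coefficients enter with telescoping weights. You name this refinement (``keep the aliased terms paired'', ``summation by parts through the recurrence'') but do not execute it, and it is exactly the technical core of the proof in \cite{XIANG2010chebyshev}; once it is carried out, the constant $4/(k\pi)$ and the product $(n-1)\cdots(n-k)$ follow by the telescoping identity you state. In short: as a citation your proposal coincides with the paper; as a self-contained proof it is the correct strategy with the key sharpened estimate asserted rather than proved.
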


Combining Theorem \ref{Inte-E1} and Lemma \ref{cheby}, we present the error bound for the new interpolation based on the Chebyshev points.
 \begin{corollary}\label{Inte-E}
    If $\bS_{n_1,n_2}^\alpha$ is defined based on a set of $n_1$ Chebyshev points of the first kind and the other $n_2$ distinct points with $n_1\geq n_2+2$, then the interpolation error of $\bS_{n_1,n_2}^\alpha$, for functions of the form
    $ K(x) =g_1(x)+g_2(x)\log|x-\alpha|$ with $g_1$ and $g_2$ smooth, satisfies
    \begin{equation}\label{Serror}
        \| K -\bS_{n_1,n_2}^\alpha  K  \|_1\leq\frac{8c(1+\|\bS_{n_1,n_2}^\alpha\|_{1,\infty})}{\pi n_2(n_1-1)\ldots(n_1-n_2)},
    \end{equation}
    where $c$ is a constant depending on $n_2$ and $\alpha$.
\end{corollary}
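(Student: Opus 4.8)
The plan is to feed the polynomial-interpolation estimate of Lemma~\ref{cheby} into the bound already established in Theorem~\ref{Inte-E1}. That theorem gives
\[
\| K -\bS_{n_1,n_2}^\alpha  K \|_1\leq 2(1+\|\bS_{n_1,n_2}^\alpha\|_{1,\infty})\,\|\tilde{g}_1-\bS_{n_1,0}\tilde{g}_1\|_\infty ,
\]
so it only remains to control $\|\tilde{g}_1-\bS_{n_1,0}\tilde{g}_1\|_\infty$, where $\bS_{n_1,0}$ is the ordinary Chebyshev interpolant on the $n_1$ Chebyshev points of the first kind. I would apply Lemma~\ref{cheby} with $k=n_2$; since $n_1\geq n_2+2$ the hypothesis $n\geq k+2$ is satisfied, and the lemma yields
\[
\|\tilde{g}_1-\bS_{n_1,0}\tilde{g}_1\|_\infty\leq \frac{4V_{n_2}}{n_2\pi (n_1-1)\cdots(n_1-n_2)},\qquad V_{n_2}:=\int_{-1}^1\frac{|\tilde{g}_1^{(n_2)}(t)|}{\sqrt{1-t^2}}\,dt .
\]
Substituting this into the Theorem~\ref{Inte-E1} bound and setting $c:=V_{n_2}$ reproduces exactly \eqref{Serror}.

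The real content of the proof is to check that $\tilde g_1$ satisfies the hypotheses of Lemma~\ref{cheby} with $k=n_2$, namely that $\tilde g_1,\tilde g_1',\dots,\tilde g_1^{(n_2-1)}$ are absolutely continuous on $[-1,1]$ and that $V_{n_2}<\infty$. The key point is that $\tilde g_2$ is by construction the degree-$(n_2-1)$ Taylor polynomial of the smooth function $g_2$ at $\alpha$, so $g_2-\tilde g_2$ vanishes to order $n_2$ at $\alpha$ and can be written as $(x-\alpha)^{n_2}S(x)$ with $S$ smooth on $[-1,1]$; thus
\[
\tilde g_1(x)=g_1(x)+(x-\alpha)^{n_2}S(x)\log|x-\alpha| .
\]
Differentiating the singular term with the Leibniz rule, each term of order $\le n_2-1$ is either bounded near $\alpha$ or of the form $(x-\alpha)^{p}\cdot(\text{smooth})\cdot\log|x-\alpha|$ with $p\ge 1$, hence extends continuously (by the value $0$) across $\alpha$; this shows $\tilde g_1\in C^{\,n_2-1}[-1,1]$. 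The $n_2$-th derivative produces a bounded piece together with a term of the form $(\text{smooth})\cdot\log|x-\alpha|$, which is locally integrable, so $\tilde g_1^{(n_2-1)}$ is absolutely continuous, and
\[
V_{n_2}=\int_{-1}^1\frac{|\tilde g_1^{(n_2)}(t)|}{\sqrt{1-t^2}}\,dt<\infty ,
\]
because near $\alpha$ the integrand is $|\log|t-\alpha||$ times a bounded factor (or, when $\alpha=\pm1$, times an $|1-t^2|^{-1/2}$ factor), and $\int_0^{\varepsilon}u^{-1/2}|\log u|\,du<\infty$. This constant $V_{n_2}$ depends on $g_1,g_2$ only through $n_2$ (the order of the subtracted Taylor polynomial) and through $\alpha$ (the location of the singularity), which is what the statement records.

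I expect the regularity bookkeeping of the previous paragraph to be the only genuine obstacle: one has to verify carefully that the Leibniz expansion of $\frac{d^m}{dx^m}\big[(x-\alpha)^{n_2}S(x)\log|x-\alpha|\big]$ leaves no term more singular than $\log|x-\alpha|$ for $m\le n_2$, and that for $m\le n_2-1$ every surviving term tends to $0$ at $\alpha$. Once that is settled, the corollary follows by simply chaining Theorem~\ref{Inte-E1} and Lemma~\ref{cheby} as above. The edge case $\alpha\in\{-1,1\}$, where the Chebyshev weight $(1-t^2)^{-1/2}$ is itself singular at the singular point, deserves the one extra remark made above: the product of the two singularities is still integrable, so Lemma~\ref{cheby} applies without change.
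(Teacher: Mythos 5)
Your proposal is correct and follows essentially the same route as the paper: chain Theorem~\ref{Inte-E1} with Lemma~\ref{cheby} applied to $\tilde g_1$ with $k=n_2$, taking $c$ to be the Chebyshev-weighted integral $\int_{-1}^1 |\tilde g_1^{(n_2)}(t)|(1-t^2)^{-1/2}dt$. The only difference is that you carry out explicitly the regularity bookkeeping for $\tilde g_1=(g_1+(x-\alpha)^{n_2}S(x)\log|x-\alpha|)$ (including the $\alpha=\pm1$ case), which the paper compresses into the remark that $\tilde g_1$ has the same regularity as $(x-\alpha)^{n_2}\log|x-\alpha|$.
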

 \begin{proof}
    According to the proof of Theorem \ref{Inte-E1}, it is clear that the regularity of $\tilde{g}_1$ is the same as $(x-\alpha)^{n_2}\log|x-\alpha|$. Due to the integral $\int_{-1}^1 \frac{|\log|x-\alpha||}{\sqrt{1-x^2}}dx<\infty $, there exists a constant $c$ depending on $\alpha$ and $n_2$ such that
\[
\int_{-1}^1 \frac{|\tilde{g}^{(n_2)}(x)|}{\sqrt{1-x^2}}dx=c<\infty
\]
By Lemma \ref{cheby}, we obtain that
\begin{equation}\label{g1err}
\|\tilde{g}_1-\bS_{n_1,0}\tilde{g}_1\|_\infty\leq \frac{4c}{n_2\pi (n_1-1)\ldots(n_1-n_2)},
\end{equation}

The interpolation error \eqref{Serror} follows directly from the combination of the result of Theorem \ref{Inte-E1} and \eqref{g1err}.
 \end{proof}

%Though we present a detailed error analysis for the case when $n_1\geq n_2+2$ in Theorem \ref{Inte-E}, the method is still valid for the case when $n_1< n_2+2$ and the error bound can be given by the inequality \eqref{error}.
If $n_1\gg n_2$ and $\|\bS_{n_1,n_2}^\alpha\|_{1,\infty}$ is bounded, it is known from Corollary \ref{Inte-E} that the interpolation error in the $L^1$-norm is about $\bO(n^{-n_2})$.

\section{New quadrature for singular integrals}
In this section, we develop a new quadrature based on the fast non-polynomial interpolation for the logarithmically singular integrals which have the form
\begin{equation}\label{integral}
\int_{-1}^1 K(x)dx
\end{equation}
where $K(x)=g_1(x)+\log|x-\alpha|g_2(x)$ with $g_1$ and $g_2$ smooth.
These integrals occur in a wide range of practical
applications such as electromagnetic and acoustic scattering problems in two dimensions and the analysis of wire antennas in three dimensions.
Sometimes, functions $g_1$ and $g_2$ of the integrand are mixed together in an expression and may be difficult to get their explicit expressions, or need much effort to evaluate each function,
such as the kernel function of the Hall\'{e}n's integral equation \cite{WANG2015wire}. The quadrature we propose here can deal directly with the integral without analyzing the functions $g_1$ and $g_2$ separately.

Our original idea to compute integrals of this kind can be decomposed into two steps. The first one is
to find an approximation of the form
\[
\tilde{g}_1(x)+\log|x-\alpha|\tilde{g}_2(x)
\]
 for the function $K$. This step shall be implemented by the non-polynomial interpolation method.
 The other step is to compute the integrals
\[
\int_{-1}^1 \tilde{g}_1(x)dx\;\text{ and}\; \int_{-1}^1 \log|x-\alpha|\tilde{g}_2(x)dx
 \]
analytically by using moments related to Chebyshev polynomials.
For this purpose, we need the computation of moments of the form
\[
\xi_k=\int_{-1}^1T_k(x)w(x)dx, \; k=0,1,\ldots,n-1
\]
for $w(x)=1$ or $w(x)=\log |x-\alpha|$.
Fortunately, both of these two type moments are known. To make the integration complete, we recall here the expressions for the moments. For $w(x)=1$, the moment, denoted as $\xi_{1,k}$, can be obtained by making change of variables $t=\cos x$,
\begin{equation}
   \xi_{1,k}=\begin{cases}\frac{2}{1-k^2}, \; k\; \text{is even},\\
    0,\; k\; \text{is odd}.
    \end{cases}
\end{equation}
When $w(x)=\log |x-\alpha|$, the moment, denoted as $\xi_{2,k}$, can be evaluated fast indirectly by the linear three-term recurrence \cite{DOMINGUEZ2014}. Let
\[
\eta_k:=\int_{-1}^1U_k(x)\log|x-\alpha|dx, k=0,1,\ldots,n-1,
\]
where $U_k$ is the Chebyshev polynomial of the second kind and of degree $k$, and set $\eta_{-1}=0$. It is known that
\[
\eta_0=(1-\alpha)\log(1-\alpha)+(1+\alpha)\log(1+\alpha)-2,
\]
and $\eta_k$ satisfies the following linear three-term recurrence
\[
\eta_k=\frac{2{\color{red}\alpha}k}{k+1}\eta_{k-1}-\frac{k-1}{k+1}\eta_{k-2}+\gamma_k
\]
where
\[
\gamma_k=\frac{2}{k+1}
\begin{cases}
(1-\alpha)\log(1-\alpha)+(1+\alpha)\log(1+\alpha)+\frac{2}{k^2-1},\;  \text{for even}\; k, \\
(1-\alpha)\log(1-\alpha)-(1+\alpha)\log(1+\alpha),\; \text{for odd}\; k.
\end{cases}
\]
Then the moment $\xi_{2,k}$ can be obtained through
\[
\xi_{2,0}=\eta_0,\; \xi_{2,k}=\frac{\eta_{k}-\eta_{k-2}}{2},\; k=1,2,\ldots,n-1.
\]
When $\alpha=\pm1$, the values for $\eta_0$ and $\gamma_k$ can be derived by taking the limit $\alpha\rightarrow \pm 1$.

Once we get the coefficients $\Ba$ and $\Bb$ of the decomposition of $K$, the new quadrature for $I[K]:=\int_{-1}^1 K(x)dx$ is given by
% the integration of $K$ over the domain $[-1,1]$ is
\[
I_{n_1,n_2}[K]:=\sum_{j=0}^{n_1-1}a_j\xi_{1,j}+ \sum_{j=0}^{n_2-1}b_j\xi_{2,j}.
\]
Thus the integral \eqref{integral} can be computed fast within about $\bO(n_2n\log n)$ operations.
Applying Corollary \ref{Inte-E} directly implies that the integral error for the new quadrature satisfies
    \begin{equation}
    \begin{split}
        |I[K]-I_{n_1,n_2}[K]|&=\left|\int_{-1}^1K(x)-\bS_{n_1,n_2}^\alpha K(x)dx \right| \leq \|K- \bS_{n_1,n_2}^\alpha K\|_1 \\
         &\leq \frac{8c(1+\|\bS_{n_1,n_2}^\alpha\|_{1,\infty})}{\pi n_2(n_1-1)\ldots(n_1-n_2)},
         \end{split}
    \end{equation}
    where $c$ is a constant depending on $n_2$ and $\alpha$.% and $I_{n_1,n_2}:=\sum_{j=0}^{n_1-1}a_j\xi_{1,j}+ \sum_{j=0}^{n_2-1}b_j\xi_{2,j}$.

\section{Numerical results}
We present in this section numerical examples to show the efficiency of the proposed new interpolation for functions with the logarithmic singularity and that of the new quadrature for singular integrals.

As the first example, we consider the non-polynomial interpolation of function
\[
K_1(x)=\sin(x)+e^x\log|x-\alpha|.
\]
Setting $\alpha=-1$, we show the dependence of the interpolation error on $n_1$ and $n_2$.
Let $e_{n_1,n_2}(K)$ denotes the numerical $L^1$ norm of the interpolation error computed by the Fej\'{e}r rule of the first kind on a graded mesh with 1024 points.  The Fej\'{e}r rule of the first kind is an interpolatory quadrature obtained by using the Chebyshev points as nodes and its performance is often comparable to Gaussian rules \cite{TREFETHEN2008}.
%
%For this purpose,  we define
%    the numerical $L^1$ norm
%    \[
%    e_{n_1,n_2}(K):=\max_{x\in S}|K(x)-K_{n_1,n_2}(x)|
%    \]
%    where $S$ is the set of 100000 equally spaced points on $(-1,1]$.
    We set $n=[4,8,16,32]$ with $n_2=1,2,3$ where $n=n_1+n_2$.
    As a comparison, we also apply the polynomial interpolation based on Chebyshev polynomials of degree less than $n_1$ to smoother functions related to $K_1$. The functions are
    \[
    \begin{split}
    S_1(x)&=\sin(x)+(e^x-e^\alpha)\log|x-\alpha|, \\
    S_2(x)&=\sin(x)+(e^x-e^\alpha-e^\alpha(x-\alpha))\log|x-\alpha|,\\
    S_3(x)&=\sin(x)+(e^x-e^\alpha-e^\alpha(x-\alpha)-e^\alpha(x-\alpha)^2/2)\log|x-\alpha|,
    \end{split}
    \]
    and $S_j\in H^j(I), j=1,2,3$. Their errors, denoted by $e_{n_1,0}(S_j)$ without ambiguity, are measured by the numerical maximum norm computed by sampling 100000 equally spaced points on$(-1,1]$. According to Theorem \ref{Inte-E1}, there exists $e_{n_1,j}(K)\leq 2(1+\|\bS_{n_1,j}^\alpha\|_{1,\infty})e_{n_1,0}(S_j)$.
    The result of the errors is shown in Table \ref{table1} which indicates that the errors of the new interpolation are much smaller than those of the polynomial interpolation for smoother functions. It is compatible with the theory.
    Besides, we also show the performance of the new interpolation when $n_1<n_2$ by approximating the same function $K_1$. Set $n=[4,8,16,32]$ with $n_1=1,2,3$ where $n_2=n-n_1$ and their errors are shown in Table \ref{table5}.
It shows that the new interpolation is valid for both the cases.
  \begin{table}[htb]
\begin{center}
\caption{Non-polynomial interpolation errors with $n_2<n_1$}
\label{table1}
\begin{tabular}{c|cc|cc|cc}
$n$ & $e_{n_1,1}(K_{1})$& $e_{n_1,0}(S_{1})$  & $e_{n_1,2}(K_{1})$ & $e_{n_1,0}(S_{2})$&  $e_{n_1,3}(K_{1})$ & $e_{n_1,0}(S_{3})$
\\
\hline
4&$3.0409e-02$&$1.6291e+00$&$1.1719e-01$&$2.8021e+00$&$4.7287e-02$&$1.4506e+00$\\
8&$4.7454e-04$&$4.1793e-02$&$1.3966e-04$&$2.2144e-02$&$1.9498e-03$&$8.2544e-02$\\
16&$3.1122e-05$&$7.7639e-03$&$6.6101e-07$&$1.5493e-04$&$3.0596e-08$&$7.7457e-06$\\
32&$1.9609e-06$&$1.6711e-03$&$9.9881e-09$&$6.2167e-06$&$1.0462e-10$&$4.5946e-08$\\
\end{tabular}
\end{center}
\end{table}

\begin{table}[htb]
\begin{center}
\caption{Non-polynomial interpolation errors with $n_2>n_1$}
\label{table5}
\begin{tabular}{c|ccc}
\hline
$n$ & $e_{1,n_2}(K_{1})$&  $e_{2,n_2}(K_{1})$ &   $e_{3,n_2}(K_{1})$ \\
\hline
4&$4.7287e-02$&$1.1719e-01$&$3.0409e-02$\\
8&$7.0331e-03$&$2.0282e-04$&$1.1268e-03$\\
16&$1.7808e-03$&$5.2863e-06$&$1.2492e-07$\\
32&$4.5226e-04$&$2.5225e-07$&$1.3404e-09$\\
\hline
\end{tabular}
\end{center}
\end{table}
%The result of the errors shown in Table \ref{table1} indicates that the effect of the non-polynomial interpolation of logarithmically singular functions in $L^2(I)$ is nearly equivalent to that of common polynomial interpolation of smoother functions in $H^{n_2}(I)$.

For the second part of the first example, we explore the dependence of the interpolation error on the singular point, i.e. the dependence on $\alpha$. To this end, we let the values of $\alpha$ range over $[-1,1]$ and $n$ ranges  in [8 16 32]  with $n_2=2$.
\begin{figure}
  % Requires \usepackage{graphicx}

  \begin{minipage}[t]{0.45\linewidth}
  \centering
  \includegraphics[width=3.5in]{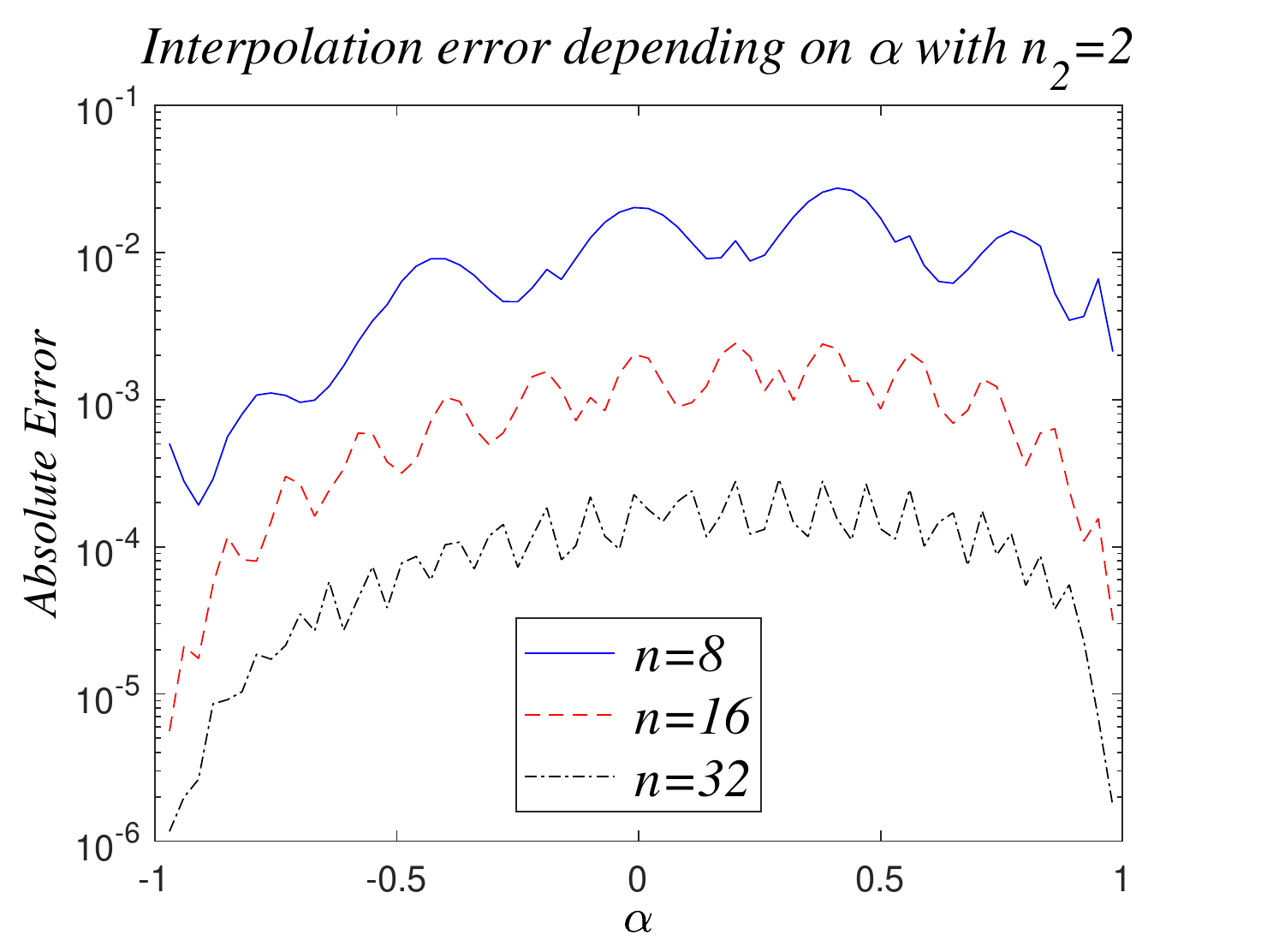}%
  \caption{Interpolation errors of function $K_1$ depending on $\alpha$}\label{p1}
  \end{minipage}%
  \hspace{0.2in}
  \begin{minipage}[t]{0.45\linewidth}
  \centering
  \includegraphics[width=3.5in]{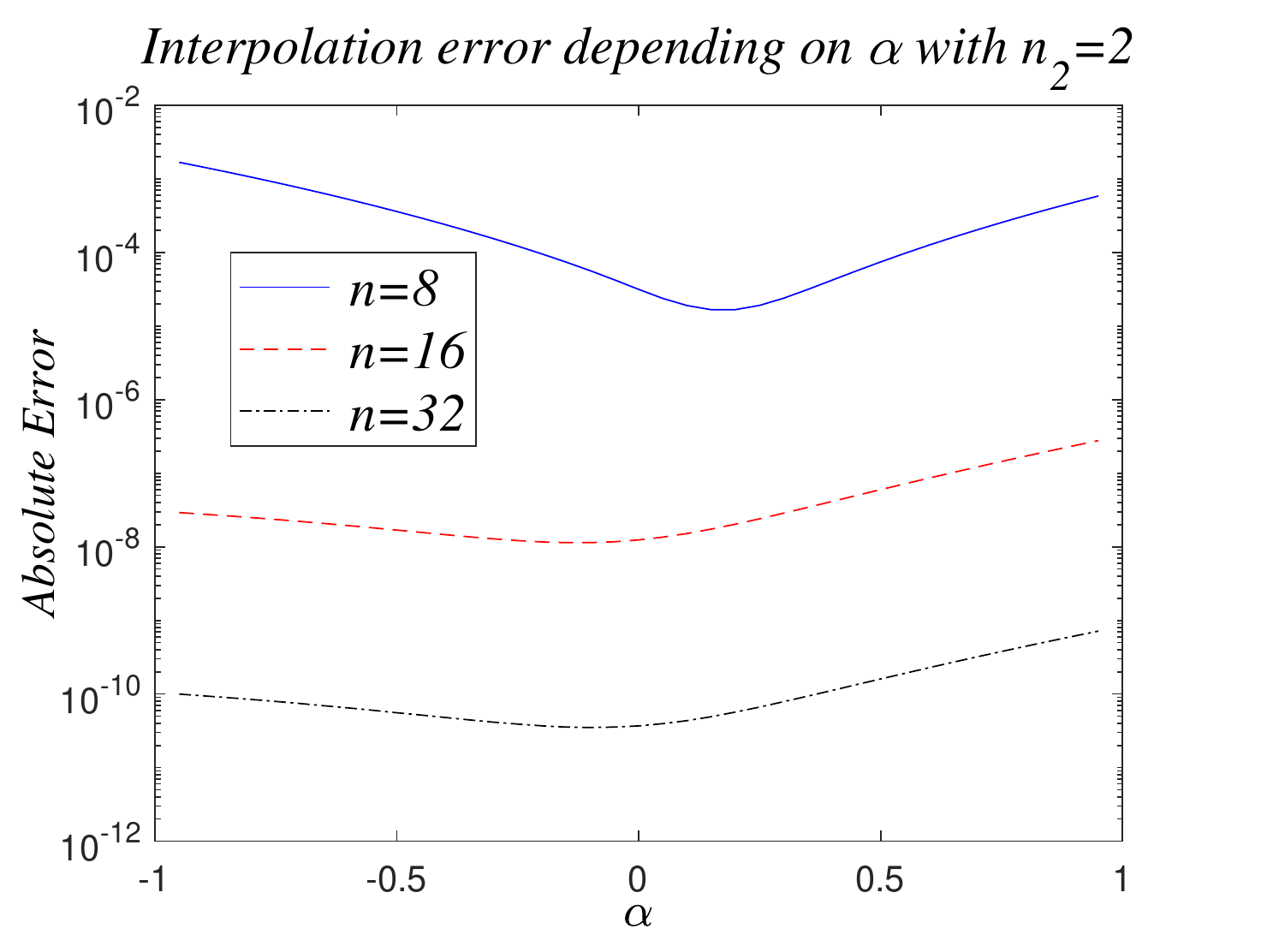}
  \caption{Interpolation errors of function $K_1$ depending on $\alpha$ by dividing the domain into two parts such that the singularity lies at the end}\label{p3}
  \end{minipage}%
\end{figure}
The dependence of the $L^1$ norm of the interpolation error on $\alpha$ is drawn in Figure \ref{p1}. It shows that the error of the new interpolation highly rely on the singular point $\alpha$.
When $\alpha$ is close to $\pm 1$, the interpolation error is much smaller. It is because that the interpolation points are clustered near $\pm 1$.
Though the interpolation is proposed for any $\alpha$ in $[-1,1]$, numerical results suggest that it is wise to separate the domain into two parts and then to transform each segment to $[-1,1]$ such that the singularity lies at one end.
We also present the dependence of the interpolation error in $L^1$ norm on the singular point $\alpha$ when combining the splitting technique in Figure \ref{p3}.

For the second example, we consider the interpolation of the functions which frequently occur in physical  applications, including
%whose composing functions have limited regularity,
%\[
%K_2(x)=\sin(x)+(x+a)^{5/2}\log|x-a|,
%\]
%Same as the first example, we also present the corresponding polynomial interpolation to
%    \[
%    \begin{split}
%    S_4(x)&=\sin(x)+((x+a)^{5/2}-(2a)^{5/2})\log|x-a|, \\
%    S_5(x)&=\sin(x)+((x+a)^{5/2}-(2a)^{5/2}-5(2a)^{3/2}(x-a)/2)\log|x-a|,\\
%    S_6(x)&=\sin(x)+((x+a)^{5/2}-(2a)^{5/2}-5(2a)^{3/2}(x-a)/2-15(2a)^{1/2}(x-a)^2/8)\log|x-a|,
%    \end{split}
%    \]
the Hankel function of the first kind and of zero order
    \[
    K_2(x)=H_0^{(1)}(x+1)
    \]
    and the kernel-related function of the Hall\'{e}n equation \cite{WANG2015wire}
    \[
    K_3(x)=\frac{1}{\pi}\int_0^{\pi/2} \frac{e^{-j2\beta\sqrt{(x+1)^2+\sin^2\phi'}}} {\sqrt{(x+1)^2+\sin^2\phi'}}d\phi',
    \]
$x\in(-1,1]$, $\beta=0.1$ and $j=\sqrt{-1}$. It is known that both of $K_2$ and $K_3$ have the form \eqref{e1} with $\alpha=-1$. Thus they can be theoretically interpolated well by the non-polynomial interpolation. The setting for the interpolation is $n=[4,8,16,32]$ and $n_2=[1,2,3]$. The result of interpolation errors for these two functions is presented in Table \ref{table6}. It is shown clearly that the non-polynomial interpolation is very effective for these two functions.
Besides, it shows that the interpolation error with $n_2=1$ is compatible with that of the case $n_2=2$ for both functions $K_2$ and $K_3$.
We note that the error depends closely on the characteristics of the ``coefficient function'' $g_2$.
For Hankel function $K_2$, it is found that when $x\rightarrow -1$, ``coefficient function'' $g_2$ of Hankel function is given by
\[
g_2(x)\approx\frac{2}{\pi}-\frac{1}{2\pi}(x+1)^2+\frac{1}{32\pi}(x+1)^4+\ldots.
\]
When appending the extra function $(x+1)\log(x+1)$ into the interpolation basis, it makes little contribution in approximating the Hankel function.
For $K_3$, it is known from \cite{BRUNO2007} that the
``coefficient function'' $g_2$ can be represented by an integral
\[g_2(x)=-\frac{1}{\pi}\int_0^\pi \frac{\cos\left(2\beta (x+1)\sin \phi'\right)}{\sqrt{1+(1+x)^2\cos^2\phi'}}d\phi' .\]
Since $g_2$ of $K_3$ is symmetric with respect to $x=-1$, it has the similar series as that of $K_2$.
It is why the interpolation errors with $n_2=2$ make no improvement for both $K_2$ and $K_3$ comparing with  the case of $n_2=1$.
%For $K_3$, it is because that the ``coefficient function'' $g_2$ of $K_3$ is almost a constant for $x\rightarrow -1$, whose absolute values are shown in Figure \ref{p2}.

\begin{table}[htb]
\begin{center}
\caption{Non-polynomial interpolation errors for functions $K_j, j=2,3$}
\label{table6}
\begin{tabular}{c|ccc|ccc}
$n$ & $e_{n_1,1}(K_{2})$ & $e_{n_1,2}(K_{2})$ &  $e_{n_1,3}(K_{2})$ &  $e_{n_1,1}(K_{3})$&  $e_{n_1,2}(K_{3})$&  $e_{n_1,3}(K_{3})$
\\
\hline
4&$3.7045e-03$&$6.4868e-03$&$1.9308e-03$&$6.6762e-04$&$2.0078e-03$&$1.0407e-03$\\
8&$1.7024e-05$&$1.4173e-05$&$6.2309e-06$&$5.8486e-06$&$6.6474e-06$&$3.6781e-05$\\
16&$2.2367e-07$&$1.5480e-07$&$2.6255e-10$&$1.1602e-07$&$7.9743e-08$&$3.2625e-09$\\
32&$3.3489e-09$&$2.2021e-09$&$1.4529e-13$&$1.7410e-09$&$1.1444e-09$&$8.2550e-13$\\
\end{tabular}
\end{center}
\end{table}

%\begin{figure}
%  % Requires \usepackage{graphicx}
%  \centering
%  \includegraphics[width=4in]{Images.pdf}\\
%  \caption{Pictures of the coefficient functions of $K_3$}\label{p2}
%\end{figure}

We next present numerical examples to show the efficiency for the new quadrature based on the non-polynomial interpolation.
For this purpose, we compute the integrals with logarithmically singular integrands:
\[
I_1:=\int_{-1}^1  (\sin x+e^x\log(x+1)) dx
\]
and
\[
 I_2(\alpha):=\int_{-1}^1 H_0^{(1)}(|x-\alpha|)dx,\;\text{with}\; \alpha=-1\;\text{and}\;1/4.
\]
The reference values are derived from Mathematica 10.0 with 15 exact digits.
%:
%$I_1=0.2739541952847627$,
% $I_2(-1)=1.42577029319703-0.282192850085101i$ and
% {\color{red} $I_2(1/4)=-0.380909104648387 - 1.225019431241370i$.}
To compute integrals $I_1$ and $I_2(\alpha)$, we set the number of quadrature nodes $n=[4,8,16,32]$ with $n_2=1,2,3$. According to the result of the first example, we do not calculate the integral $I_2(1/4)$ directly, but make a transformation such that the singularity locates at $-1$, i.e.
\[
I_2(\alpha)=\int_{-1}^1\left[\frac{1-\alpha}{2}H_0^{(1)} \left(\frac{1-\alpha}{2}(t+1)\right)+\frac{1+\alpha}{2}H_0^{(1)} \left(\frac{1+\alpha}{2}(t+1)\right)\right]dt.
\]
We note that the new integrand of $I_2(\alpha)$ still has the form \eqref{e1}.
 The absolute error between the approximation and the reference value is shown, respectively for $I_1$, $I_2(-1)$
 and $I_2(1/4)$, in Table \ref{table2}, \ref{table3} and \ref{table4}. For comparison, we also present the commonly used method, combining graded meshes with Fej\'{e}r rule of the first kind by transforming $[-1,1]$ to $[0,1]$.
 The number of segments is selected the same as $n$ and the number of quadrature nodes on each segment is chose to be 4. The graded parameter $q$ is set to be 4, i.e. the graded mesh is $\{(j/n)^4: j=0,1,\ldots,n\}$.
 It is obvious that the new quadrature for the logarithmically singular integrals is more accurate than the classical method and it needs much less evaluations of the integrands.
 In this example, the classical method needs four times of the evaluations of function compared with the new integration.
 To show the efficiency of the proposed method, we also present the comparison of CPU time between the new quadrature and the graded mesh method in computing $I_1$ and $I_2(-1)$ when attaching the same tolerance which is shown in Table \ref{table7} and \ref{table8}. The new quadrature method takes about $9\times 10^{-4}s$ while the classical method $1.5\times 10^{-3}s$. Numerical experiments show that the proposed quadrature is a competitive method in calculating the logarithmically singular integrals.

\begin{table}[htb]
\begin{center}
\caption{Absolute Errors of computation of $I_1$ }
\label{table2}
\begin{tabular}{c|cccc}
$n$ & $n_2=1$& $n_2=2$ & $n_2=3$ &  Graded mesh
\\
\hline
$4  $&$ 3.2523e-03 $&$  3.0721e-03  $&$ 2.9601e-04 $&$  2.5763e-04$\\
$8  $&$ 5.5618e-05 $&$  8.1836e-06  $&$ 3.7523e-04 $&$  1.8757e-05$\\
$16 $&$ 3.5207e-06 $&$  1.5837e-07  $&$ 9.9447e-09 $&$  1.8099e-06$\\
$32 $&$ 2.2078e-07 $&$  2.4433e-09  $&$ 3.5326e-11 $&$  1.7045e-07$\\
\end{tabular}
\end{center}
\end{table}

\begin{table}[htb]
\begin{center}
\caption{Absolute Errors of computation of $I_2(-1)$}
\label{table3}
\begin{tabular}{c|cccc}
$n$ & $n_2=1$ & $n_2=2$ & $n_2=3$ &  Graded mesh
\\
\hline
  $4 $&$7.3757e-04 $&$  1.7811e-03$&$   1.7071e-03 $&$  9.8617e-04$\\
  $8 $&$1.1963e-06 $&$  8.9449e-06$&$   3.3051e-05 $&$  9.1426e-05$\\
  $16 $&$2.1273e-08 $&$  1.4550e-07$&$   1.2065e-09 $&$  7.5273e-06$\\
  $32 $&$3.3892e-10 $&$  2.1459e-09$&$   8.0437e-13 $&$  5.8297e-07$\\
\end{tabular}
\end{center}
\end{table}

\begin{table}[htb]
\begin{center}
\caption{Absolute Errors of computation of $I_2(1/4)$}
\label{table4}
\begin{tabular}{c|cccc}
$n$ & $n_2=1$ & $n_2=2$ & $n_2=3$ &  Graded mesh
\\
\hline
4&$1.3000e-04$&$5.0916e-04$&$4.8218e-04$&$1.0289e-03$\\
8&$3.5141e-07$&$3.4277e-06$&$2.3821e-06$&$9.5176e-05$\\
16&$6.3122e-09$&$4.3286e-08$&$1.2532e-10$&$7.7819e-06$\\
32&$1.0061e-10$&$6.3712e-10$&$6.8883e-14$&$5.9920e-07$\\
\end{tabular}
\end{center}
\end{table}
\begin{table}[htb]
\begin{center}
\caption{CPU Time of computing $I_1$}
\label{table7}
\begin{tabular}{c|cccc}
 & $n_2=1$ & $n_2=2$ & $n_2=3$ &  Graded mesh
\\
\hline
$n$&$256$ & $64$ & $32$ & $ 256$\\
Error&$5.3959e-11$ &$3.8044e-11$ & $3.5326e-11$ & $8.8474e-11$\\
Time(s)&$9.2483e-04$ & $8.4059e-04$ & $8.5580e-04$ & $1.4313e-03$\\
\end{tabular}
\end{center}
\end{table}

\begin{table}[htb]
\begin{center}
\caption{CPU Time of computing $I_2(-1)$}
\label{table8}
\begin{tabular}{c|cccc}
 & $n_2=1$ & $n_2=2$ & $n_2=3$ &  Graded mesh
\\
\hline
$n$&$32$ & $64$ & $32$ & $ 256$\\
Error&$3.3892e-10$ &$3.3037e-11$ & $8.0437e-13$ & $2.2454e-10$\\
Time(s)&$8.2928e-04$ & $8.8050e-04$ & $8.8022e-04$ & $1.5378e-03$\\
\end{tabular}
\end{center}
\end{table}
\section{Acknowledgement}
This research is partially supported by the Construct Program of the
Key Laboratory of High Performance Computing and Stochastic Information Processing
and Natural Science Foundation of China under grant 11401207,11271370. The authors are grateful to anonymous referees for their helpful comments and useful suggestions for improvement of this paper.

%\bibliographystyle{siam}% ¡°standard¡± styles, plain, unsrt, abbrv and alpha.
%\bibliography{Report_9_2}

\end{CJK}
\end{document}